\newcommand{\corurl}{red}
\newcommand{\corcite}{ForestGreen}
\newcommand{\corlink}{blue}
\newtheorem{theorem}{Theorem}
\newtheorem{proposition}{Proposition}
\newtheorem{corollary}{Corollary}
\newtheorem{definition}{Definition}
\newtheorem{remark}{Remark}
\newenvironment{proof}[1][Proof]{\textbf{#1.} }{\
  \rule{0.5em}{0.5em}}
\newcommand*{\leib}{{\rm Leib}}
\newcommand*{\leibn}{{\rm LeibN}}
\newcommand*{\lie}{{\rm L}}
\newcommand*{\varl}{\mathcal{L}}
\newcommand*{\ncom}{\mathbb{C}}
\newcommand*{\algl}{\mathfrak{l}}
\newcommand*{\suitc}{\mathcal{C}}
\title{About the reducibility of the variety of complex Leibniz algebras}
\author{J.M. Ancochea Berm\'udez and J. Margalef--Bentabol}
\begin{document}

\centerline{\textsf{\textbf{\huge{About the reducibility of the variety}}}}\mbox{}\vspace*{0.5ex}
\centerline{\textsf{\textbf{\huge{of complex Leibniz algebras}}}}\mbox{}\vspace*{0.5ex}

\begin{center}
  \begin{tabular}{ccccc}
    \textsf{J.M. Ancochea Berm\'udez} & \quad & \textsf{J. Margalef--Bentabol} & \quad & \textsf{J. S\'anchez Hern\'andez}\\[0.1ex]
    \href{mailto:ancochea@mat.ucm.es}{ancochea@mat.ucm.es} & \quad & \href{mailto:juanmargalef@ucm.es}{juanmargalef@ucm.es} & \quad & \href{mailto:jnsanchez@mat.ucm.es}{jnsanchez@mat.ucm.es}\\ \mbox{}
  \end{tabular}

  Dpto. Geometr\'{\i}a y Topolog\'{\i}a,\\ Facultad CC. Matem\'aticas UCM\\Plaza de Ciencias 3, E-28040 Madrid
\end{center}

\begin{abstract}
  \noindent In this paper, using the notions of perturbation and contraction of Lie and Leibniz algebras, we show that the algebraic varieties of Leibniz and nilpotent Leibniz algebras of dimension greater than 2 are reducible.
\end{abstract}

\medskip

{\bfseries\slshape Keywords:\/} Leibniz algebra, perturbation, rigidity, contraction.

{\bfseries\slshape AMS Classification Numbers:\/} 17A32

\section{Definition and preliminary properties}
The aim of this work is to prove the reducibility of the Leibniz and nilpotent Leibniz algebraic varieties, that we will denote $\leib^n$ and $\leibn^n$ respectively. First we will classify the 3-dimensional nilpotent Leibniz algebras over the complex field. Then, using the internal set theory of Nelson \cite{Ne}, we will introduce what a perturbation of a Leibniz algebra is. Such notion allows us to determine the open components of the variety $\leibn^3$, it will turn out to have two of them. The other algebras are obtained as the limit by contraction of the rigid algebra or the family of rigid algebras defining the open components. Moreover, we characterize the rigid Lie algebras over the variety $\lie^n$ of Lie algebras that are rigid over $\leib^n$.

\begin{definition}
  A \textbf{Leibniz algebra law} $\mu$ over $\mathbb{C}$ is a bilinear map $\mu: \mathbb{C}^{n}\times \mathbb{C}^{n}\rightarrow \mathbb{C}^n$ satisfying
\begin{equation}
\mu\Big(x,\mu(y,z)\Big)=\mu\Big(\mu(x,y),z\Big)-\mu\Big(\mu(x,z),y\Big). \label{Ldef}
\end{equation}
We call \textbf{Leibniz algebra} to any pair $(\mathbb{C}^n,\mu)$ where  $\mu$ is a Leibniz algebra law.
\end{definition}
The previous equation is known as the Leibniz identity. From now on, the law will be identified with its algebra and the non written products will be supposed to be zero.

Notice that if $\mu$ is anticommutative $\mu(x,y)=-\mu(y,x)$, the Leibniz identity is equivalent to the Jacobi one
\begin{equation}
  \mu\Big(x,\mu(y,z)\Big)+\mu\Big(y,\mu(z,x)\Big)+\mu\Big(z,\mu(x,y)\Big)=0,
\end{equation}
as~\eqref{Ldef} is obtained by placing the element $x$ on the first place at every element of the Jacobi identity.

Let $\algl=(\mathbb{C}^n,\mu)$ be a Leibniz algebra, we define the \textbf{right-decreasing central sequence} as
\[\suitc^1(\algl)=\algl \qquad \suitc^2(\algl)=\mu(\algl,\algl)\qquad \cdots\qquad \suitc^{k+1}(\algl)=\mu(\suitc^k(\algl),\algl)\qquad \cdots\]

\begin{definition}
A Leibniz algebra $\algl$ is \textbf{nilpotent} if there exists some $k\in\mathbb{N}$ such that $\suitc^k(\algl)=\{0\}$.
\end{definition}

\medskip

For a given nilpotent Leibniz algebra $\algl=(\ncom^n,\mu)$, we define for every $x\in\ncom^n$ the endomorphism $R_x:\ncom^n\rightarrow\ncom^n$ as
\[R_x(y)=\mu(y,x),\quad \forall y\in\ncom^n.\]
It is easy to check that $R_x$ is a nilpotent endomorphism, then for any $x\in\algl\setminus\mathcal{C}^2(\algl)$, we write $s_\mu(x)=(s_1(x),\ldots,s_k(x))$ the decreasing sequence $s_1\geq s_2\geq\ldots\geq s_k$ of dimensions of the Jordan blocks of the nilpotent operator $R_x$. We may now order lexicographically $s_\mu(x)$ for all $x\in\algl\setminus\mathcal{C}^2(\algl)$ and denote $s(\mu)$ its maximum which is, up to isomorphism, an invariant of the isomorphism class of the algebra $\algl$. We call it \textbf{characteristic sequence} of $\algl$. If $x\in \algl\setminus\suitc^2(\algl)$ satisfies  $s_\mu(x)=s(\mu)$, we say that $x$ is a \textbf{characteristic vector} of $\algl$.

We will denote $\leib^n$ the set of all Leibniz algebras over $\ncom^n$ and $\leibn^n$ the set of nilpotent Leibniz algebras over $\ncom^n$.

Notice that we can identify any Leibniz algebra $\mu$ with its structure constants over a fixed base. Given $\{e_1,\ldots,e_n\}$ a base of $\ncom^n$, from the identity \eqref{Ldef} we have that the coordinates defined by $\mu(e_i,e_j)=a_{ij}^ke_k$ are the solution of
\begin{equation}
  a_{jk}^la_{il}^m-a_{ij}^la_{lk}^m+a_{ik}^la_{lj}^m=0,\ \ \ 1\leq i,j,k,m\leq n
\end{equation}
As the nilpotent conditions are also polynomials, $\leib^n$ and $\leibn^n$ can be endowed with an algebraic structure over $\ncom^{n^3}$.

\section{Classification of the nilpotent Leibniz algebra of dimension 3}\label{seccion classification}

Let $\algl=(\ncom^3,\mu)$ be a nilpotent Leibniz algebra. According to the previous section, the possible characteristic sequences of $\algl$ are $\suitc(\algl)\in\{(3),(2,1),(1,1,1)\}$.
\begin{enumerate}
  \item If $s(\algl)=(3)$, there exists a characteristic vector $e_1$ and a base $\{e_1,e_2,e_3\}$ such that
    \begin{align*}
      \mu(e_1,e_1)&=e_2,\\
      \mu(e_2,e_1)&=e_3.
    \end{align*}
    As $\mu(x,e_2)=\mu(x,\mu(e_1,e_1))=\mu(\mu(x,e_1),e_1)-\mu(\mu(x,e_1),e_1)=0$, we have that $R_{e_2}= 0$. The Leibniz identity for $(e_1,e_2,e_1)$, $(e_2,e_2,e_1)$ and $(e_3,e_2,e_1)$ shows that $\mu(e_1,e_3)=\mu(e_2,e_3)=\mu(e_3,e_3)=0$. In this case thus, there only exists (up to isomorphism) one nilpotent Leibniz algebra $\mu_1$ of maximal characteristic sequence, which is given by
    \begin{align*}
      \mu_1(e_1,e_1)&=e_{2},\\
      \mu_1(e_2,e_1)&=e_3.
    \end{align*}
  \item If $s(\algl)=(2,1)$, we have two possibilities
    \begin{enumerate}
      \item There exists a characteristic vector $e_1$ such that $\mu(e_1,e_1)\neq 0$.
      \item For every characteristic vector $x$, we have $\mu(x,x)=0$.
    \end{enumerate}
    For the $(a)$ case, we can find a base $\{e_1,e_2,e_3\}$ such that
    \begin{align*}
      \mu(e_1,e_1)&=e_2,\\
      \mu(e_2,e_1)&=0,\\
      \mu(e_3,e_1)&=0.
    \end{align*}
    The Leibniz identity for $(x,e_1,e_1)$ leads again to $\mu(x,e_2)=0$, whereas using it for $(e_2,e_2,e_3)$ and the nilpotency of $R_{e_3}$ lead to $\mu(e_2,x)=0$. Finally the Leibniz identity for $(e_1,e_1,e_3)$ and $(e_3,e_3,e_3)$ implies that
    \begin{align*}
      \mu(e_1,e_3)&=ae_2,\\
      \mu(e_3,e_3)&=be_2.
    \end{align*}
    If we consider a change of base $\{x_1,x_2,x_3\}$ such that $\mu(x_2,x_1)=0$, $\mu(x_3,x_1)=0$ and $\mu(x_2,x_3)=0$, we remain in this family of nilpotent Leibniz algebras, and the nullity or non nullity of $a$ and $b$ are preserved under this change of basis. Then, if $a\neq0$ considering $x_1=e_1$, $x_2=e_2$ and $x_3=\tfrac{1}{a}e_3$, leads to the family of non isomorphic Leibniz algebras $\mu_{2,b}$ given by
    \begin{align*}
      \mu_{2,b}(e_1,e_1)&=e_2,\\
      \mu_{2,b}(e_3,e_3)&=be_2,\\
      \mu_{2,b}(e_1,e_3)&=e_2.
    \end{align*}
    If $a=0$ but $b\neq0$, we can analogously take $b=1$ leading to the sole algebra $\mu_{3}$ given by
      \begin{align*}
       \mu_3(e_1,e_1)&=e_2,\\
       \mu_3(e_3,e_3)&=e_2.
    \end{align*}
    Finally if $a=b=0$ we obtain the algebra $\mu_4$ given by $\mu_4(e_1,e_1)=e_2$
    \smallskip

    For the $(b)$ case, there exists a basis $\{e_1,e_2,e_3\}$ such that $\mu(e_2,e_1)=e_3$. The nilpotency of $R_{e_3}$, $R_{e_2}$ and the fact that there is no characteristic vector $x$ such that $\mu(x,x)\neq 0$, imply that the Leibniz algebra $\mu$ is in fact a Lie algebra isomorphic to the Heisenberg algebra of dimension 3 i.e.\ $\mu$ is isomorphic to $\mu_5$ given by
    $\mu_5(e_1,e_2)=-\mu_{5}(e_2,e_1)=-e_3$.
  \item If $s(\algl)=(1,1,1)$, it turns out that $\mu$ is the abelian algebra $\mu_6=0$.
\end{enumerate}
The previous analysis shows the following result
\begin{theorem}
  Every nilpotent complex Leibniz algebra is isomorphic to one of the algebras $\mu_i$ with $i=1,3,4,5,6$ or to $\mu_{2,b}$ with $b\in\ncom$.
\end{theorem}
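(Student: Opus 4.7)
The approach is to run a case analysis on the characteristic sequence $s(\algl)$. Since $R_x$ is a nilpotent endomorphism of $\ncom^3$, its Jordan block sizes must form a partition of $3$, leaving only the three possibilities $(3)$, $(2,1)$ and $(1,1,1)$. In each case I will fix a characteristic vector $e_1$, exploit the Jordan form of $R_{e_1}$ to read off the first few products involving $e_1$, and then feed the Leibniz identity \eqref{Ldef} on small triples $(e_i,e_j,e_k)$ together with the nilpotency of the remaining $R_{e_i}$ to kill or parameterize the surviving structure constants. A final normalization by an admissible change of basis pins down the isomorphism class.

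For $s(\algl)=(3)$ the Jordan chain already delivers $e_2=\mu(e_1,e_1)$ and $e_3=\mu(e_2,e_1)$; then \eqref{Ldef} applied to $(x,e_1,e_1)$ forces $R_{e_2}=0$, and three further instances dispose of the remaining products involving $e_3$, yielding $\mu_1$ with no free parameter. For $s(\algl)=(2,1)$ I split on whether some characteristic vector $x$ has $\mu(x,x)\neq 0$: in the affirmative subcase, the Jordan structure plus \eqref{Ldef} reduces the law to two scalars $a,b$ from $\mu(e_1,e_3)=ae_2$ and $\mu(e_3,e_3)=be_2$, and the rescaling $e_3\mapsto ce_3$ acts as $(a,b)\mapsto (ca,c^2b)$, so only the vanishing pattern of $(a,b)$ is an invariant, producing $\mu_{2,b}$, $\mu_3$ and $\mu_4$; in the negative subcase $\mu$ is forced to be antisymmetric, the Leibniz identity collapses to the Jacobi identity, and a nilpotent 3-dimensional Lie algebra with characteristic sequence $(2,1)$ is, up to isomorphism, the Heisenberg algebra $\mu_5$. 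Finally, for $s(\algl)=(1,1,1)$ every characteristic vector satisfies $R_x=0$; since the characteristic vectors form a Zariski-dense subset of $\ncom^3$ and $\{x:R_x=0\}$ is a linear subspace, $R_x=0$ for all $x$ and $\mu=\mu_6=0$.

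The delicate step I expect to need the most care is subcase $(2,1)(b)$: deducing from $\mu(x,x)=0$ on the set of characteristic vectors that $\mu$ is antisymmetric on all of $\ncom^3$, and then that the Leibniz identity degenerates to Jacobi. The natural tool is polarization of the quadratic map $x\mapsto\mu(x,x)$ combined with the Zariski-density of characteristic vectors in $\ncom^3\setminus\suitc^2(\algl)$, which forces $\mu(x,x)\equiv 0$ and hence $\mu(x,y)+\mu(y,x)=0$. Once antisymmetry is in hand, the rest is standard. The remaining work, in particular the bookkeeping in subcase $(2,1)(a)$ confirming that no further admissible change of basis collapses distinct values of $b$, is routine given the explicit action of the rescaling on $(a,b)$.
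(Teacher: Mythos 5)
Your plan follows essentially the same route as the paper: a case split on the characteristic sequence $(3)$, $(2,1)$, $(1,1,1)$, Jordan normalization of $R_{e_1}$, the Leibniz identity on small triples to kill or parameterize the remaining products, and a final rescaling; your Zariski-density/polarization arguments in subcase $(2,1)(b)$ and in the case $(1,1,1)$ only make explicit what the paper asserts without proof. One aside is wrong, though it does not damage the exhaustiveness claim being proved: under $e_3\mapsto ce_3$ the pair transforms as $(a,b)\mapsto(ca,c^2b)$, so when $a\neq0$ the ratio $b/a^{2}$ is an invariant beyond the mere vanishing pattern of $(a,b)$ --- this is precisely why $\{\mu_{2,b}\}$ survives as a genuine one-parameter family of pairwise non-isomorphic laws rather than collapsing to a single class, and your own normalization (set $a=1$, keep $b$) already reflects this.
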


\section{Contractions and perturbations of the Leibniz algebras}
In this section $\varl^n$ will denote the variety of Lie algebras $\lie^n$, or one of the varieties $\leib^n$ or $\leibn^n$. If $\mu_0\in \varl^n$, we denote $\mathcal{O}\!\left(\mu_0\right)$ the orbit of $\mu_0$ under the action of the general linear group $GL(n,\ncom)$ over $\varl^n$:
\[ \begin{array}{rcl}
  GL(n,\ncom)\times \varl^n & \longrightarrow & \varl^n\\
  (f,\mu_0) & \longmapsto & f^{-1}\circ \mu_0\circ (f\times f)
\end{array}\]
where $f^{-1}\circ\mu_0\circ (f\times f)(x,y)=f^{-1}(\mu_0(f(x),f(y)))$.

Let $C$ be an irreducible component of $\varl^n$ containing $\mu_0$, then $\mathcal{O}\!\left(\mu_0\right)\subseteq C$. We can endow naturally the variety $\varl^n$ with two non equivalent topologies: the metric topology induced by the inclusion of $\varl^n$ in $\ncom^{n^3}$, and the Zariski topology. Notice that the latter is contained in the former. As $C$ is closed in the Zariski topology, the adherence $\overline{\mathcal{O}\!\left(\mu_0\right)}^Z$ of the orbit $\mu_0$ is also contained in $C$.

\medskip
In analogy with the Lie algebras we can formally define the notion of limit over the variety $\varl^{n}$ as follows: let $f_{t}\in GL\left( n,\mathbb{C}\right)$ be a family of non-singular endomorphism depending on a continuous parameter $t$, and consider some $\mu\in\varl^{n}$. If for every pair $x,y\in \ncom^n$ the limit
\begin{equation}
  \mu^{\prime}\left(x,y\right):=\lim_{t\rightarrow0}\mu_t\left(x,y\right):=\lim_{t\rightarrow0}\,f_{t}^{-1}%
  \circ\mu\left(  f_{t}\left(  x\right)  ,f_{t}\left(  y\right)\right) \label{GW}
\end{equation}
exists, then $\mu^{\prime}$ is an algebra law of $\varl^n$. We call this new law the \textbf{contraction} of $\mu$ by $\left\{ f_{t}\right\}$. Using the action of $GL\left( n,\mathbb{C}\right)$ over the variety $\varl^n$, it is easy to see that a contraction of $\mu$ corresponds to a point of the closure of the orbit $\mathcal{O}\!\left(\mu\right)$.

It is important to notice that any non trivial contraction $\mu\rightarrow\mu^{\prime}$ satisfies
\begin{align*}
  &\dim\mathcal{O}\!\left(\mu\right)>\dim\mathcal{O}\!\left(\mu ^{\prime}\right),\\
  &\dim Z_R(\mu)\leq\dim Z_R(\mu') &&\text{where }\ Z_R(\mu)=\left\{x\in\ncom^n\,:\,\mu(y,x)=0,\ \forall y\in\ncom^n\right\},\\
  &\,s(\mu)\geq s(\mu') &&\text{in the nilpotent case}.
\end{align*}
Therefore every component $C$ containing $\mu_0$ contains also any of its contractions.

\begin{definition}
  Assuming the non standard analysis (I.S.T.) of Nelson \cite{Ne}, let $\mu_0$ be a standard law of $\varl^n$. A perturbation $\mu$ of $\mu_0$ over $\varl^n$ is another law in $\varl^n$ satisfying the condition $\mu(x,y)\sim\mu_0(x,y)$ for every standard $x,y$ over $\ncom^n$, where $a\sim b$ means that the vector $a-b$ is infinitesimally small.
\end{definition}

In particular if $\mu'=\lim_{t\rightarrow0}\mu_t$ is a contraction of $\mu$, for every $t_0$ infinitesimally small, the law $\mu_{t_0}$ is isomorphic to $\mu$ and is in fact a perturbation of $\mu'$. Such remark encodes perfectly the link between the notions of perturbation and contraction.\medskip

\noindent{\bf Consequence.} The invariants of the nilpotent laws characterizing the irreducibles components are the stable invariants under perturbation. In particular if $\widetilde{\mu}$ is a perturbation of $\mu$, then
\begin{align*}
  &\dim\mathcal{O}\!\left(\widetilde{\mu}\right)>\dim\mathcal{O}\!\left(\mu\right),\\
  &\dim Z_R(\widetilde{\mu})\leq\dim Z_R(\mu),\\
  &\,s(\widetilde{\mu})\geq s(\mu) && \text{in the nilpotent case}.\qquad\qquad\qquad\qquad\qquad\qquad
\end{align*}

\begin{definition}
  A standard law $\mu\in\varl^n$ is \textbf{rigid} over $\varl^n$ if any perturbation of $\mu$ is isomorphic to it.
\end{definition}

This definition translates to the non-standard language the classic notion of rigidity. In fact, if any perturbation of $\mu$ is isomorphic to $\mu$, its halo (i.e.\ the class of laws $\mu'$ such that $\mu'\sim\mu$) is contained on the orbit $\mathcal{O}\!\left(\mu\right)$. This implies that the orbit is open and, by the transfer principle, we obtain the equivalence. In particular we obtain that the rigid algebras cannot be obtained by contraction and that the rigidity of $\mu\in \varl^n$ over $\varl^n$ implies that $\overline{\mathcal{O}\!\left(\mu\right)}^Z$ is an irreducible component of the variety $\varl^n$.

\section{The variety \texorpdfstring{$\bm{\leibn^3}$}{LeibN(3)}}
In this section, using the notions of the previous paragraph, we determine the irreducibles components of the variety $\leibn^3$.

\begin{enumerate}
\item {\it The law $\mu_1$ (sec.\ \ref{seccion classification}) is rigid over $\leibn^3$.} Clear as it is the only nilpotent Leibniz algebra with a maximal characteristic sequence.

\item {\it $\mu_{2,b}$ ($b\neq0$), $\mu_3$ and $\mu_5$ are not contractions of $\mu_1$.} The dimension of the center cannot decrease with a contraction, however $\dim(Z_R(\mu_1))=2$, $\dim(Z_R(\mu_{2,b}))=1$ (in the $b\neq0$ case), $\dim(Z_R(\mu_3))=1$ and $\dim(Z_R(\mu_5))=1$.

\item {\it The only contractions of $\mu_1$ are isomorphic to $\mu_{2,0}$, $\mu_4$ and $\mu_6$.} It is enough to consider the following family of automorphisms of $\ncom^3$
\[\begin{array}{|l}
   f_t(e_1)=te_1 \\
   f_t(e_2)=t^2e_2 \\
   f_t(e_3)=e_3+te_1\\
\end{array}\qquad
\begin{array}{|l}
  g_t(e_1)=te_1 \\
  g_t(e_2)=t^2e_2 \\
  g_t(e_3)=e_3,
\end{array}\qquad
\begin{array}{|l}
  h_t(e_1)=te_1 \\
  h_t(e_2)=te_2 \\
  h_t(e_3)=te_3,
\end{array}
\]
to obtain the contractions of $\mu_1$ into $\mu_{2,0}$, $\mu_4$ and $\mu_6$ respectively.

\item If $b\neq0$ and $\widetilde{\mu}$ is a perturbation of $\mu_{2,b}$, there exists some $b'\in\ncom$ such that $\widetilde{\mu}$ is isomorphic to $\mu_{2,b'}$. This means that the family $\left\{\mu_{2,b}\right\}_{b\neq0}$ is rigid.

\smallskip

In fact, on one hand we have that $\widetilde{\mu}\not\in\mathcal{O}\!\left(\mu_1\right)$ and thus $s(\widetilde{\mu})=(2,1)$. On the other hand, by the transfer property~\cite{Ne} we can assume that $b$, $\mu_{2,b}$ and $\{e_1,e_2,e_3\}$ are standard and therefore
\begin{align*}
 &\widetilde{\mu}(e_1,e_1)\sim\mu_{2,b}(e_1,e_1),\\
 &\widetilde{\mu}(e_1,e_3)\sim\mu_{2,b}(e_1,e_3),\\
 &\widetilde{\mu}(e_3,e_3)\sim\mu_{2,b}(e_3,e_3),
\end{align*}
and the result follows.

\item {\it The algebras $\mu_{2,0}$, $\mu_3$, $\mu_4$, $\mu_5$ and $\mu_6$ can be perturbed over the laws of the family $\left\{\mu_{2,b}\right\}_{b\neq0}$.}
 In order to obtain perturbed algebras isomorphic to the one of the family $\{\mu_{2,b}\}_{b\neq0}$, it is enough to consider the bilinear maps defined by
\begin{align*}
  &\varphi_2(e_3,e_3)=e_2,& &\varphi_3(e_1,e_3)=e_2,\\
  &\varphi_4(e_3,e_3)=\varphi_3(e_1,e_3)=e_2,& &\varphi_5(e_1,e_1)=e_1,
\end{align*}
and the laws of $\leibn^3$ given by $\mu_{2,0}+\varepsilon\varphi_2$, $\mu_i+\varepsilon\varphi_i$ for $i=3,4,5$, where $\varepsilon\sim 0$ is non zero.
\end{enumerate}

Analogously, we can show that the only contraction of $\mu_3$ and $\mu_{2,0}$ are isomorphic to $\mu_4$ and $\mu_6$, and that the only contraction of $\mu_4$ and $\mu_5$ is $\mu_6$. We can summarize all these results in the following diagram, where the arrows represent contractions and hence the rigid elements are those for which no arrow finishes at them
\begin{align*}
    \xymatrix{
         \mu_1     \ar@/^/[rrd]                      &&                        &&              &&        \\
                                                     && \mu_{2,0} \ar@/^/[rrd] &&              &&        \\
         \mu_{2,b} \ar@/^/[rru] \ar[rr] \ar@/_/[rrrd]&& \mu_3     \ar[rr]      && \mu_4 \ar[rr]&& \mu_6  \\
                                                     && &\mu_5     \ar@/_/[rrru]&              &&}
\end{align*}

After this study, we can classify the components of the variety as follows

\begin{theorem}
   The variety $\leibn^3$ is the union of the two irreducibles components $\overline{\mathcal{O}\!\left(\mu_1\right)}^Z$ and $\overline{\bigcup_{b\in\ncom}\mathcal{O}\!\left(\mu_{2,b}\right)}^Z$.
\end{theorem}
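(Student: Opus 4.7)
The classification of Section \ref{seccion classification} leaves just six isomorphism types to be placed in the variety $\leibn^3$, and items 1--5 above essentially do all of the work. My plan is to assemble them: first identify the two proposed sets as irreducible components via rigidity, then use the listed contractions and perturbations to show that every orbit is contained in one of them.

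For the first step, item 1 asserts that $\mu_1$ is rigid, so its halo sits inside $\mathcal{O}(\mu_1)$; by the transfer principle this gives that $\mathcal{O}(\mu_1)$ is Zariski-open in its closure, hence $\overline{\mathcal{O}(\mu_1)}^Z$ is an irreducible component. Item 4 gives the analogous statement for the one-parameter family $\{\mu_{2,b}\}_{b\neq 0}$: every perturbation of a member of the family is another member, so $\bigcup_{b\in\ncom}\mathcal{O}(\mu_{2,b})$ is open in its Zariski closure, which is therefore also an irreducible component. Item 2 ensures that these two components are distinct, since the inequality $\dim Z_R(\mu)\leq\dim Z_R(\mu')$ for contractions prevents $\mu_{2,b}$ ($b\neq 0$), $\mu_3$ and $\mu_5$ from lying in $\overline{\mathcal{O}(\mu_1)}^Z$.

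For the second step, I would use two general facts recorded earlier: every contraction of a law $\mu_0$ lies in $\overline{\mathcal{O}(\mu_0)}^Z$, and dually, any standard law admitting a perturbation isomorphic to $\mu'$ lies in $\overline{\mathcal{O}(\mu')}^Z$. Item 3 therefore puts $\mu_{2,0}$, $\mu_4$, $\mu_6$ inside $\overline{\mathcal{O}(\mu_1)}^Z$, while the explicit bilinear maps $\varphi_2,\varphi_3,\varphi_4,\varphi_5$ of item 5 provide perturbations of $\mu_{2,0}$, $\mu_3$, $\mu_4$, $\mu_5$ isomorphic to members of the family $\{\mu_{2,b}\}_{b\neq 0}$, placing them in $\overline{\bigcup_{b\in\ncom}\mathcal{O}(\mu_{2,b})}^Z$. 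Combined with the tautological inclusion of each $\mu_{2,b}$ in the second component, this exhausts the classification list, so $\leibn^3$ is the union of the two proposed components.

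The principal obstacle is conceptual rather than computational: translating the non-standard statements ``$\mu_0$ is rigid'' and ``$\mu$ admits a perturbation isomorphic to $\mu'$'' into the Zariski-topological statements ``$\mathcal{O}(\mu_0)$ is open in its closure'' and ``$\mu\in\overline{\mathcal{O}(\mu')}^Z$''. Once the transfer principle supplies this bridge---as already invoked after the definition of rigidity---the theorem reduces to inspecting the diagram assembled from items 1--5.
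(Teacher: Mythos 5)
Your proposal is correct and follows essentially the same route as the paper: the theorem there is stated as a direct consequence of items 1--5, with item 1 and item 4 giving the two rigid objects (a law and a one-parameter family) whose orbit closures are irreducible components, item 2 separating them, and items 3 and 5 using contractions and perturbations to place every remaining orbit from the classification of Section \ref{seccion classification} into one of the two closures. Your explicit remark on the transfer-principle bridge between rigidity/perturbation and Zariski openness/closure is exactly the justification the paper invokes after its definition of rigidity.
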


\begin{remark}
  In reference~\cite{Alb}, the authors claim that the law $\lambda_5$ of $\leibn^3$ defined (over the basis $\{x_1,x_2,x_3\}$) by
\[\lambda_5(x_2,x_2)=\lambda_5(x_3,x_2)=\lambda_5(x_2,x_3)=x_1,\]
is rigid. Notice however that $\lambda_5$ is isomorphic to $\mu_3$ via the change of basis $e_1=x_2$,$e_2=x_1$ and $e_3=-ix_2+ix_3$. As $\mu_3$ can be perturbed into the family $\{\mu_{2,b}\}$, such claim is not correct.
\end{remark}

\section{The reducibility of the varieties \texorpdfstring{$\bm{\leibn^n}$}{LeibN(n)} and \texorpdfstring{$\bm{\leib^n}$}{Leib(n)}}

Let $\mu_0\in\leibn^n$ be a law with characteristic sequence $s(\mu_0)=(n)$. In that case there exists a basis $\{e_1,\ldots,e_n\}$ of $\ncom^n$ such that $\mu_0(e_i,e_1)=e_{i+1}$ for $i=1,2,\ldots n-1$. Once again, applying the Leibniz identity to $(x,e_1,e_1)$ we obtain that $R_{e_2}=0$. In fact every $R_{e_k}=0$ for $k\geq2$, as by induction $\mu_0(x,e_{k+1})=\mu_0(x,\mu(e_1,e_k))=\mu_0(\mu_0(x,e_1),e_k)-\mu_0(\mu_0(x,e_k),e_1)=0$.

\begin{proposition}
   Any nilpotent Leibniz algebra $\mu$ of dimension $n$ and characteristic sequence $s(\mu)=(n)$ is isomorphic to $\mu_0$, where $\mu_0(e_i,e_1)=e_{i+1}$ for $i=1,\ldots,n-1$.
\end{proposition}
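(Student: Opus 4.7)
My plan is to observe that the analysis already performed in the paragraph preceding the statement is a full proof, provided one first verifies that a characteristic vector may legitimately serve as the start of a Jordan chain for itself. Concretely, given any $\mu$ with $s(\mu)=(n)$, I pick a characteristic vector $x\in\algl\setminus\suitc^2(\algl)$, so that $R_x$ is a single Jordan block of size $n$, and aim to set $e_1:=x$ and $e_{i+1}:=\mu(e_i,e_1)=R_x^i(e_1)$ for $i\geq 1$.

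The one non-routine point — which I anticipate as the main obstacle — is to check that $x$ is itself a cyclic vector for $R_x$, i.e.\ $R_x^{n-1}(x)\neq 0$, equivalently $\mu(x,x)\neq 0$. The argument is short but essential: since $R_x$ has a single Jordan block, $\ker R_x$ is one-dimensional and coincides with $\mathrm{Im}\,R_x^{n-1}\subseteq\mathrm{Im}\,R_x\subseteq\suitc^2(\algl)$. If $\mu(x,x)=0$ then $x\in\ker R_x\subseteq\suitc^2(\algl)$, contradicting that $x$ is a characteristic vector. Consequently $\{e_1,R_x(e_1),\ldots,R_x^{n-1}(e_1)\}=\{e_1,\ldots,e_n\}$ is a basis of $\ncom^n$, and $\mu(e_n,e_1)=R_x^n(e_1)=0$ because $R_x$ is nilpotent of index $n$.

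I then copy the induction from the preceding paragraph verbatim: Leibniz on $(y,e_1,e_1)$ gives $R_{e_2}=0$, and assuming $R_{e_k}=0$, Leibniz on $(y,e_k,e_1)$ together with the inductive hypothesis yields
\[ \mu(y,e_{k+1})=\mu(y,\mu(e_k,e_1))=\mu(\mu(y,e_k),e_1)-\mu(\mu(y,e_1),e_k)=0, \]
so $R_{e_k}=0$ for every $k\geq 2$. This kills every product of the form $\mu(\,\cdot\,,e_k)$ with $k\geq 2$. Combined with $\mu(e_i,e_1)=e_{i+1}$ for $i<n$ and $\mu(e_n,e_1)=0$, the law $\mu$ is completely determined on $\{e_1,\ldots,e_n\}$ and matches the prescribed $\mu_0$. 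Interpreting the change of basis as an element of $GL(n,\ncom)$ delivers the isomorphism $\mu\simeq\mu_0$.
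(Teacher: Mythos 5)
Your overall strategy coincides with the paper's: the paragraph preceding the Proposition is the proof, modulo the existence of a basis of the form $\{x,R_x(x),\ldots,R_x^{n-1}(x)\}$ starting at a characteristic vector, and your induction showing $R_{e_k}=0$ for all $k\geq 2$ (hence that the law is forced to coincide with $\mu_0$) is correct. The trouble is in the one step you yourself single out as the crux. You assert that $x$ being a cyclic vector for $R_x$, i.e.\ $R_x^{n-1}(x)\neq 0$, is ``equivalently $\mu(x,x)\neq 0$'', and then you only prove $\mu(x,x)\neq 0$. For $n\geq 3$ these conditions are not equivalent: for a single Jordan block of size $n$ the non-cyclic vectors form the hyperplane $\ker R_x^{n-1}$, which strictly contains the line $\ker R_x$, so a vector can satisfy $R_x(x)\neq 0$ and still fail to be cyclic (a vector sitting in the middle of a Jordan chain does exactly this). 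Consequently, the sentence beginning ``Consequently'' --- that $\{e_1,R_x(e_1),\ldots,R_x^{n-1}(e_1)\}$ is a basis --- does not follow from what you actually established.

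The gap is genuine but local, and your own inclusion chain already contains the repair: instead of $\ker R_x=\mathrm{Im}\,R_x^{n-1}\subseteq\suitc^2(\algl)$, observe that for a single Jordan block of size $n$ one has $\ker R_x^{n-1}=\mathrm{Im}\,R_x$ (both subspaces have dimension $n-1$, and the inclusion $\mathrm{Im}\,R_x\subseteq\ker R_x^{n-1}$ is automatic from $R_x^{n}=0$), together with $\mathrm{Im}\,R_x\subseteq\mu(\algl,\algl)=\suitc^2(\algl)$. Since a characteristic vector satisfies $x\notin\suitc^2(\algl)$, this yields directly $x\notin\ker R_x^{n-1}$, i.e.\ $R_x^{n-1}(x)\neq 0$, which is precisely the cyclicity you need. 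With that substitution the remainder of your argument goes through unchanged and matches the paper's.
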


\begin{remark}
The law $\mu$ satisfying $\dim(\mathcal{C}^i(\mu))-\dim(\mathcal{C}^{i+1}(\mu))=1$ for every $i=1,\ldots, n$ is called \textbf{null-filiform} in reference~\cite{Ayu2}.
\end{remark}

As $\mu_0$ is the nilpotent Leibniz algebra with a maximal characteristic sequence, it has to be rigid and $\overline{\mathcal{O}\!\left(\mu_0\right)}^Z$ is an irreducible component of the variety $\leibn^n$. On the other hand if $\mu$ is a non abelian Lie algebra, $\dim(Z(\mu))\leq n-2$ and $\dim(Z_R(\mu_0))=n-1$ implying that $\mu$ cannot be a contraction of $\mu_0$. From this considerations we have the following theorem

\begin{theorem}
  The variety $\leibn^n$ for $n\geq 3$ is reducible.
\end{theorem}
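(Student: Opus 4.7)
The plan is to exhibit two distinct irreducible components of $\leibn^n$ and thereby establish reducibility. The first component will be the orbit closure $\overline{\mathcal{O}(\mu_0)}^Z$ of the null-filiform algebra from the preceding proposition; I would verify it is a component by invoking the rigidity argument from Section~3, since $\mu_0$ is, up to isomorphism, the unique nilpotent Leibniz algebra of dimension $n$ whose characteristic sequence equals the maximal value $(n)$, and the characteristic sequence is stable under perturbation.

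Next, I would compute the right centre $Z_R(\mu_0)$. The discussion immediately preceding the proposition establishes $R_{e_k} = 0$ for all $k \geq 2$, so $\langle e_2, \ldots, e_n \rangle \subseteq Z_R(\mu_0)$, while $R_{e_1}(e_1) = e_2 \neq 0$ rules out $e_1$. Hence $\dim Z_R(\mu_0) = n-1$, and by the monotonicity of $\dim Z_R$ under contractions stated in Section~3, every law in $\overline{\mathcal{O}(\mu_0)}^Z$ has right centre of dimension at least $n-1$.

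It remains to produce a nilpotent Leibniz algebra of dimension $n$ whose right centre has smaller dimension. I would take the direct sum $\mathfrak{h} := \mu_5 \oplus \ncom^{n-3}$ of the 3-dimensional Heisenberg algebra from Section~\ref{seccion classification} with an abelian factor of dimension $n-3$ (for $n=3$ this is simply $\mu_5$). This is a nilpotent Lie algebra, hence a point of $\leibn^n$, and since it is a Lie algebra its right centre coincides with its usual centre, of dimension $1 + (n-3) = n-2 < n-1$. Thus $\mathfrak{h} \notin \overline{\mathcal{O}(\mu_0)}^Z$, so any irreducible component of $\leibn^n$ containing $\mathfrak{h}$ differs from $\overline{\mathcal{O}(\mu_0)}^Z$, which proves reducibility.

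The only delicate step is choosing a witness that uniformly handles all $n \geq 3$; the Heisenberg-plus-abelian construction does this cleanly, and all remaining ingredients (rigidity from the maximal characteristic sequence, monotonicity of the right centre under contraction, and closedness of orbit closures in the Zariski topology) are already recorded in the earlier sections.
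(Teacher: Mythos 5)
Your proposal is correct and follows essentially the same route as the paper: establish $\overline{\mathcal{O}(\mu_0)}^Z$ as an irreducible component via the rigidity of the null-filiform law, note $\dim Z_R(\mu_0)=n-1$, and exclude a non-abelian nilpotent Lie algebra (whose centre has dimension at most $n-2$) from that component by the monotonicity of $\dim Z_R$. The only difference is cosmetic: the paper states the bound $\dim Z(\mu)\leq n-2$ for an arbitrary non-abelian Lie algebra, while you instantiate a concrete witness, the Heisenberg algebra plus an abelian factor.
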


\begin{remark}
$\leibn^2$ is irreducible and the only irreducible component is $\overline{\mathcal{O}\!\left(\mu\right)}^Z$, where $\mu$ is the law defined over the basis $\{e_1,e_2\}$ by $\mu(e_1,e_1)=e_2.$
\end{remark}

Let $\algl=(\ncom^n,\mu)$ be a Leibniz algebra. It is clear that $Z_R(\mu)$ is an ideal of $\algl$ that contains the elements of the form $\mu(x,y)+\mu(y,x)$, $\mu(x,x)$ and $\mu(\mu(x,y),\mu(y,x))$ with $x,y\in\ncom^n$. Thus $\algl/Z_R(\mu)$ is a Lie algebra, which shows the following claim
\begin{center}
{\it Every Leibniz algebra which is not a Lie algebra verifies $Z_R(\mu)\neq0$.}
\end{center}
\begin{theorem}
 A $\lie^n$-rigid Lie algebra without center is also rigid over $\leib^n$.
\end{theorem}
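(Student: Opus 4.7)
The plan is to let $\widetilde{\mu}$ be an arbitrary perturbation of $\mu$ in $\leib^n$ and show, in two reduction steps, that $\widetilde{\mu}$ is forced to be a Lie algebra and then, by the hypothesized rigidity in $\lie^n$, isomorphic to $\mu$.

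First I would exploit the semi-continuity principle stated just after the definition of perturbation: for any perturbation $\widetilde{\mu}$ of $\mu$ one has $\dim Z_R(\widetilde{\mu})\leq\dim Z_R(\mu)$. Since $\mu$ is a Lie algebra, the antisymmetry $\mu(y,x)=-\mu(x,y)$ gives
\[
Z_R(\mu)=\{x\in\ncom^n:\mu(y,x)=0\ \forall y\}=\{x:\mu(x,y)=0\ \forall y\}=Z(\mu),
\]
and by hypothesis this center is trivial. Consequently $\dim Z_R(\widetilde{\mu})\leq 0$, hence $Z_R(\widetilde{\mu})=0$.

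Now I would invoke the displayed claim that immediately precedes the theorem: any Leibniz algebra which is not a Lie algebra has $Z_R\neq 0$. Contrapositively, the vanishing $Z_R(\widetilde{\mu})=0$ forces $\widetilde{\mu}$ to lie in the Lie sub-variety $\lie^n\subset\leib^n$. Therefore the perturbation $\widetilde{\mu}$ of $\mu$, originally taken in $\leib^n$, is in fact a perturbation of $\mu$ inside $\lie^n$.

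Finally, since $\mu$ is $\lie^n$-rigid, any such perturbation is isomorphic to $\mu$, so $\widetilde{\mu}\simeq\mu$. As $\widetilde{\mu}$ was an arbitrary perturbation in $\leib^n$, this shows $\mu$ is rigid over $\leib^n$. I do not expect a serious obstacle here: the only point that needs care is the identification $Z_R(\mu)=Z(\mu)$ when $\mu$ is a Lie algebra, which relies on antisymmetry, together with the fact that the semi-continuity statement for $Z_R$ and the characterization of non-Lie Leibniz algebras by $Z_R\neq 0$ are already established in the preceding sections.
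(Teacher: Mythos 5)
Your proof is correct and follows exactly the paper's own argument: the paper likewise deduces $Z_R(\widetilde{\mu})=0$ from $Z(\mu)=0$ via the semi-continuity of $\dim Z_R$ under perturbation, concludes $\widetilde{\mu}\in\lie^n$ from the observation that non-Lie Leibniz algebras have $Z_R\neq 0$, and finishes with the $\lie^n$-rigidity of $\mu$. Your write-up merely makes explicit the identification $Z_R(\mu)=Z(\mu)$ for Lie algebras, which the paper leaves implicit.
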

\begin{proof}
Let $\mu$ be a rigid Lie algebra without center. Let $\widetilde{\mu}$ be a perturbation of $\mu$ in $\leibn^n$. As $Z(\mu)=0$ then $Z_R(\widetilde{\mu})=0$ and $\widetilde{\mu}\in\lie^n$. By the rigidity of $\mu$ over $\lie^n$, $\widetilde{\mu}$ is isomorphic to $\mu$.
\end{proof}

\begin{theorem}
   A Lie algebra with non null center cannot be rigid over $\leib^n$.
\end{theorem}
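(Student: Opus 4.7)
The plan is to exhibit, for any Lie algebra $\mu$ with $Z(\mu)\neq 0$, an explicit Leibniz perturbation $\widetilde\mu$ that is not anticommutative, and therefore cannot be a Lie algebra at all, let alone isomorphic to $\mu$. Recall that by the claim preceding the theorem, every Leibniz algebra that is not a Lie algebra satisfies $Z_R(\widetilde\mu)\neq 0$, so the perturbation is compatible with the constraint that $\dim Z_R$ may only decrease under perturbation.

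Pick any $z\in Z(\mu)\setminus\{0\}$ and a non-zero linear form $\omega\in(\ncom^n)^*$ vanishing on the subspace $\suitc^2(\mu)+\ncom z$, so that $\omega$ kills every bracket $\mu(y,w)$ and also $z$ itself. Such an $\omega$ exists as soon as this subspace is proper in $\ncom^n$, which is the generic case (for instance whenever $\mu$ is not perfect and $z\in\suitc^2(\mu)$, or whenever $\dim\mu/\suitc^2(\mu)\geq 2$). Then define
\[
\widetilde\mu(x,y):=\mu(x,y)+\varepsilon\,\omega(x)\omega(y)\,z
\]
for a non-zero infinitesimal $\varepsilon$.

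The core verification is that $\widetilde\mu$ lies in $\leib^n$. Expanding the difference $\widetilde\mu(x,\widetilde\mu(y,w))-\widetilde\mu(\widetilde\mu(x,y),w)+\widetilde\mu(\widetilde\mu(x,w),y)$ order by order in $\varepsilon$, the $\varepsilon^0$ contribution vanishes by the Jacobi identity for $\mu$; every $\varepsilon^1$ term either contains a factor $\mu(\cdot,z)=0$ (by centrality of $z$) or a factor of the form $\omega(\mu(\cdot,\cdot))=0$ (since $\omega$ vanishes on $\suitc^2(\mu)$); and every $\varepsilon^2$ term carries the factor $\omega(z)=0$. Thus the Leibniz identity holds exactly. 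Now $\widetilde\mu(x,x)=\varepsilon\,\omega(x)^2 z\neq 0$ whenever $\omega(x)\neq 0$, so $\widetilde\mu$ is not anticommutative, hence not Lie, hence not isomorphic to $\mu$, and $\mu$ is not rigid over $\leib^n$.

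The main obstacle I foresee is the existence of $\omega$ in every case: the construction fails precisely when $\suitc^2(\mu)+\ncom z=\ncom^n$ for every central $z$, which occurs for certain perfect Lie algebras with center contained in the derived ideal. In such borderline situations a different cocycle would have to be produced, but for the entire nilpotent world (where $\suitc^2(\mu)\subsetneq\mu$ is automatic) and more generally whenever $\mu$ fails to be perfect, the construction above is enough.
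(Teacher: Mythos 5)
Your construction is, up to notation, the one the paper itself uses: the paper perturbs $\mu$ by $\varepsilon\varphi$ where $\varphi(x,x)=y$ is the only non-vanishing product on a basis through a generator $x$ and $y$ is central, and writing $\omega$ for the coordinate form dual to $x$ in that basis this is exactly your $\varphi(u,v)=\omega(u)\omega(v)z$. What you add, and what the paper omits entirely, is the verification that $\mu+\varepsilon\varphi$ actually satisfies the Leibniz identity; your order-by-order computation is correct, and it correctly isolates the hypothesis that makes the argument work, namely the existence of a hyperplane containing $\suitc^2(\mu)+\ncom z$. On the region where that hypothesis holds, your proof is complete and is essentially the paper's proof done carefully.

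The obstacle you flag at the end is therefore a genuine gap, and it is a gap in the paper's proof as well, hidden in the unjustified assertion that $\widetilde{\mu}=\mu+\varepsilon\varphi$ ``is a Leibniz algebra.'' But be aware that your closing sentence overstates what you have proved: non-perfectness of $\mu$ is \emph{not} sufficient. Take $\mu=\mathfrak{gl}_2=\mathfrak{sl}_2\oplus\ncom c$ in $\leib^4$: it is not perfect ($\suitc^2(\mu)=\mathfrak{sl}_2$ has codimension $1$), its center is $\ncom c$, and $\suitc^2(\mu)+\ncom c=\ncom^4$, so no admissible $\omega$ exists and neither your cocycle nor the paper's $\varphi$ is available. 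Worse, this case cannot be dismissed by first perturbing inside $\lie^4$ (the natural reduction, since a Lie algebra that is non-rigid over $\lie^n$ is a fortiori non-rigid over $\leib^n$): Whitehead's lemmas give $H^2(\mathfrak{gl}_2,\mathfrak{gl}_2)=0$, so $\mathfrak{gl}_2$ \emph{is} rigid over $\lie^4$, and some genuinely non-anticommutative perturbation would have to be produced for it by other means. As written, both your argument and the paper's establish the theorem only under the extra hypothesis that $\suitc^2(\mu)+\ncom z$ is a proper subspace for some non-zero central $z$ --- which does cover the abelian and, more generally, the whole nilpotent case (a non-abelian nilpotent Lie algebra needs at least two generators, so $\dim\bigl(\ncom^n/\suitc^2(\mu)\bigr)\geq 2$), and your two explicitly stated sufficient conditions are correct; it is only the final ``whenever $\mu$ fails to be perfect'' that should be retracted.
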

\begin{proof}\mbox{}\\
  Let $\mu$ be a Lie algebra with non null center. We may assume $n$ and $\mu_0$ standard. Let $x$ be a generator of the Lie algebra, $y$ a non zero vector of the center and $\varphi$ the bilinear algebra such that its only non vanishing product is $\varphi(x,x)=y$. Thus the perturbation $\widetilde{\mu}$ of $\mu$ given by $\widetilde{\mu}=\mu+\varepsilon\varphi$ where $\varepsilon\sim0$ is non zero, is a Leibniz algebra that is not a Lie algebra, then $\widetilde{\mu}$ cannot be isomorphic to $\mu$.
\end{proof}

\begin{corollary}
 The variety $\leib^n$ is reducible for $n\geq2$. In fact,
\begin{itemize}
\item $\leib^6$ has at least 5 irreducible components,

\item $\leib^7$ has at least 8 irreducible components,

\item $\leib^8$ has at least 33 irreducible components.

\item $\leib^9$ has at least 41 irreducible components.
\end{itemize}

For $n\geq 81$, the number of irreducible components of $\leib^n$ is lower from below by $\Gamma(\sqrt{n})$, where $\Gamma$ is Euler gamma function (see~\cite{Car} and~\cite{Goz}).
\end{corollary}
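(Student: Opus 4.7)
The plan is to combine two sources of pairwise distinct irreducible components of $\leib^n$. The first source is the null-filiform nilpotent Leibniz law $\mu_0$ from the preceding Proposition: since $\mu_0$ is the unique nilpotent Leibniz algebra of maximal characteristic sequence $(n)$, it is rigid in $\leibn^n$, and its orbit closure $\overline{\mathcal{O}(\mu_0)}^Z$ is an irreducible component of $\leib^n$ sitting inside $\leibn^n$. The second source is the preceding Theorem, which guarantees that every $\lie^n$-rigid Lie algebra without center is also rigid over $\leib^n$, so that each such Lie algebra contributes an irreducible component of $\leib^n$; two non-isomorphic rigid centerless Lie algebras produce disjoint $GL(n,\ncom)$-orbits and hence distinct components.

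The components from the second source are disjoint from the null-filiform component because rigid centerless Lie algebras have $Z_R=0$, while $\dim Z_R(\mu_0)=n-1$ and $\dim Z_R$ can only grow along contractions. Reducibility of $\leib^n$ for $n\geq 2$ is then immediate: in every such dimension there exists at least one rigid centerless Lie algebra (e.g.\ the affine Lie algebra $\mathfrak{aff}(1)$ for $n=2$, and Lie algebras of the form $T\ltimes\mathfrak{n}$ for $n\geq 3$), giving at least two distinct irreducible components together with the null-filiform one.

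For the quantitative bounds in dimensions $6,7,8,9$, I would invoke the classifications of rigid Lie algebras from \cite{Car} and \cite{Goz} and count those that are centerless and pairwise non-isomorphic; adding the null-filiform Leibniz component produces the stated lower bounds $5, 8, 33, 41$. The asymptotic bound $\Gamma(\sqrt{n})$ for $n\geq 81$ follows from Carles' construction of at least $\Gamma(\sqrt{n})$ pairwise non-isomorphic rigid centerless solvable Lie algebras in dimension $n$, each contributing a distinct irreducible component of $\leib^n$ via the preceding Theorem.

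The main obstacle is not conceptual but rather a careful reading of the external references: one has to verify that the rigid Lie algebras listed in \cite{Car} and \cite{Goz} (and produced in Carles' asymptotic construction) are indeed centerless, which is the hypothesis required by the preceding Theorem. This is built into the standard constructions, where the algebras appear as semidirect products $T\ltimes\mathfrak{n}$ with a maximal torus $T$ acting on a nilradical $\mathfrak{n}$ without non-zero fixed vectors, forcing the center to be trivial.
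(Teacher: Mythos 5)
Your second source of components --- the theorem that a $\lie^n$-rigid Lie algebra without center remains rigid over $\leib^n$, combined with the counts of such algebras in \cite{Car} and \cite{Goz} --- is exactly the mechanism the paper relies on for this corollary, and your remark that one must check the rigid Lie algebras in those references are centerless is the right point of care. The genuine gap is in your first source. You assert that $\overline{\mathcal{O}(\mu_0)}^Z$ is an irreducible component of $\leib^n$ ``sitting inside $\leibn^n$,'' but the paper only establishes that $\mu_0$ is rigid \emph{over $\leibn^n$} (via the characteristic sequence, an invariant that controls nilpotent perturbations only). Rigidity in a closed subvariety does not transfer to the ambient variety: a perturbation of $\mu_0$ inside $\leib^n$ need not be nilpotent, so the characteristic-sequence argument says nothing about it. The paper's own final remark shows this failure concretely in dimension $2$: the unique component of $\leibn^2$ is $\overline{\mathcal{O}(\mu)}^Z$ with $\mu(e_1,e_1)=e_2$, yet the two components of $\leib^2$ are generated by the non-nilpotent laws $\varphi_1$ and $\varphi_2$, so the nilpotent law is \emph{not} a component of $\leib^2$. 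This matters twice in your argument: it silently adds one to each of the claimed counts, and it is the only thing carrying the reducibility claim in the dimensions where you exhibit just a single rigid centerless Lie algebra (e.g.\ $n=2$, where $\mathfrak{aff}(1)$ alone gives only one component).

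The repair is to replace the null-filiform component by the following observation: if $\mu$ is a rigid centerless Lie algebra, every law in $\overline{\mathcal{O}(\mu)}^Z$ is anticommutative (anticommutativity is a Zariski-closed condition preserved under the $GL(n,\ncom)$-action), so that component is contained in $\lie^n$; since $\leib^n$ contains non-Lie laws for every $n\geq 2$ (for instance $\mu(e_1,e_1)=e_2$ extended by zero), there must exist at least one further irreducible component, and $\leib^n$ is reducible. The quantitative statements then rest solely on the number of pairwise non-isomorphic rigid centerless Lie algebras supplied by \cite{Car} and \cite{Goz}, without any contribution from $\mu_0$.
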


\begin{remark}
The variety $\leib^2$ is the union of the two irreducibles components $\overline{\mathcal{O}(\varphi_1)}^Z$ and $\overline{\mathcal{O}(\varphi_2)}^Z$, where the laws are defined, in the basis $\{e_1,e_2\}$ of $\ncom^2$, by $\varphi_1(e_1,e_2)=-\varphi_1(e_2,e_1)=e_2$ (Lie algebra) and $\varphi(e_2,e_1)=e_2$.
\end{remark}

\section*{Acknowledgments}
The first author is supported by the research project MTM2006-09152 of the Ministerio de Ecucaci\'on y Ciencia. This work is a translation from the french version, made by the second author, of the paper \href{http://www.heldermann.de/JLT/JLT17/JLT173/jlt17034.htm}{Sur la R\'eductibilit\'e des Vari\'et\'es des Lois d'Alg\`ebres de Leibniz Complexes}  published at the J.\ Lie Theory  \textbf{17} (2007), No. 3, 617--624.


\begin{thebibliography}{99}

\bibitem{Alb} S. Alberio, B.A. Omirov, I.S. Rakhimov, \textit{Varieties of nilpotent complex Leibniz algebras of dimension less then five}, Comm. Algebra \textbf{33} (1993) 1575--1585.

\bibitem{Anc} J.N. Ancochea, M. Goze, \textit{Sur la classification des alg\`ebres de Lie nilpotentes de dimension 7}, C.R.A.S~\textbf{302} (1986) 611--613.

\bibitem{Ayu1} Sh.A. Ayupov, B.A. Omirov, \textit{On Leibniz algebras. Algebra and Operators Theory}, In Proccedings of the Colloquium in Tashkent 1997. Kluwer~Acad.~Publ. (1998) 1--12.

\bibitem{Ayu2} Sh.A. Ayupov, B.A. Omirov, \textit{On some classes of nilpotent Leibniz algebras}, Siberian Math. J. \textbf{42} (2001) 15--24.

\bibitem{Car} R. Carles, \textit{Sur certaines classes d'alg\`ebres de Lie rigides} Math.~Ann.~\textbf{272} (1985) 477--483.

\bibitem{Goz} M. Goze, J.M. Ancochea, \textit{On the classification of rigid Lie algebras}, J.~Algebra~\textbf{245} (2001) 68--91.

\bibitem{Lod} J.L. Loday, \textit{Une version non commutative des alg\`ebres de Lie: les alg\`ebres de Leibniz}, Ens.~Math.~\textbf{39} (1993) 269--293.

\bibitem{Ne} E. Nelson, \textit{Internal set theory: a new approach to nonstandard analysis}, Bull.~Amer.~Math.~Soc. \textbf{83} (1977)  1165--1198.

\bibitem{Rak} I.S. Rakhimov, \textit´{On degenerations of finite-dimensional nilpotent complex Leibniz algebras}, J.~Mat.~Sciences \textbf{136} (2006) 3980--3983.

\end{thebibliography}
\end{document}